\newtheorem{theorem}{Theorem}
\newtheorem{proposition}[theorem]{Proposition}
\newtheorem{lemma}[theorem]{Lemma}
\newtheorem{corollary}[theorem]{Corollary}
\theoremstyle{remark}
\newtheorem{example}[theorem]{Example}
\newtheorem{remark}[theorem]{Remark}
\def\@bignumber#1#2{%
  \ifx#2\end
    #1\let\next\@gobble
  \else
    #1\hspace{0pt plus 1pt}\let\next\@bignumber
  \fi
  \next#2}
\newcommand{\bignumber}[1]{\@bignumber#1\end}
\title{The isomorphism problem for ideal class monoids of numerical semigroups}
\author{P. A. García-Sánchez}
\address{Departamento de \'Algebra and IMAG, Universidad de Granada, E-18071 Granada, Espa\~na}
\email{pedro@ugr.es}
\date{today}
\keywords{poset numerical semigroup, ideal class monoid}
\subjclass[2020]{20M14, 20M12, 20M13, 06F05, 06A06}
\begin{document}

\begin{abstract}
    From any poset isomorphic to the poset of gaps of a numerical semigroup $S$ with the order induced by $S$, one can recover $S$. As an application, we prove that two different numerical semigroups cannot have isomorphic posets (with respect to set inclusion) of ideals whose minimum is zero. We also show that given two numerical semigroups $S$ and $T$, if their ideal class monoids are isomorphic, then $S$ must be equal to $T$.
\end{abstract}

\maketitle

\section{Introduction}

A \emph{numerical semigroup} $S$ is a submonoid of $(\mathbb{N},+)$ such that $\mathbb{N}\setminus S$ has finitely many elements, where $\mathbb{N}$ denotes the set of non-negative integers. A set of integers $I$ is said to be an \emph{ideal} of $S$ if $I+S\subseteq I$ and $I$ has a minimum (see for instance \cite[Chapter~3]{ns-ap} for some basic background on ideals of numerical semigroups). On the set of ideals of $S$, we define the following relation: $I\sim J$ if there exists an integer $z$ such that $I=z+J$. The set of ideals modulo this equivalence relation is known as the \emph{ideal class monoid} of $S$, denoted $\mathcal{C}\ell(S)$. Addition of two classes $[I]$ and $[J]$ is defined in the natural way: $[I]+[J]=[I+J]$. 

The ideal class monoid of a numerical semigroup was introduced in \cite{b-k} inspired by the definition of ideal class group of a Dedekind domain. In \cite{c-da-gs}, we proved that from some combinatorial properties of the ideal class monoid of a numerical semigroup we can recover relevant information of the numerical semigroup like, for instance, its genus, multiplicity, type, and number of unitary extensions.

We say that an ideal $I$ of $S$ is \emph{normalized} if $\min(I)=0$; we denote by $\mathfrak{I}_0(S)$ the set of normalized ideals $I$ of $S$. The map $\mathcal{C}\ell(S)\to \mathfrak{I}_0(S)$, $[I]\mapsto -\min(I)+I$ is bijective. Moreover, for $I,J\in \mathfrak{I}_0(S)$, the ideal $I+J$ is also in $\mathfrak{I}_0(S)$. Thus, the mapping $[I]\mapsto -\min(I)+I$ is a monoid isomorphism. 

In \cite[Section~4.3]{c-da-gs}, we studied some of the properties of the poset $(\mathfrak{I}_0(S),\subseteq)$. It is natural to wonder if $(\mathfrak{I}_0(S),\subseteq)$ completely determines $S$ in the following sense: if $T$ is a numerical semigroup and $(\mathfrak{I}_0(S),\subseteq)$ is isomorphic to $(\mathfrak{I}_0(T),\subseteq)$, then $S=T$? Recall that two posets $(P,\le_P)$ and $(Q,\le_Q)$ are \emph{isomorphic} if there exists an order isomorphism $f$ from $P$ to $Q$, that is, $f$ is bijective and for every $a,b\in P$, $a\le_P b$ if and only if $f(a)\le_Q f(b)$. We translate this problem of poset isomorphism of normalized ideals of a numerical semigroup to an isomorphism problem of posets of gaps with respect to the order induced by the semigroup.

For a numerical semigroup $S$, the order induced by $S$ on the set of integers, denoted $\le_S$, is defined as $a\le_S b$ if $b-a\in S$. The poset $(\mathbb{Z},\le_S)$ (with $\mathbb{Z}$ the set of integers) has been studied for several families of numerical semigroups, and more particularly the M\"obius function associated to $\le_S$ (see \cite{c-ra} or \cite{c-gm-ra} for a generalization to affine semigroups).

The set $\operatorname{G}(S)=\mathbb{N}\setminus S$ is the \emph{gap set} of $S$; its elements are called \emph{gaps} of $S$. It was already shown in \cite[Proposition~2.6]{b-k} that the set $\mathcal{C}\ell(S)$ is in one-to-one correspondence with the set of antichains of gaps with respect to $\le_S$ (these antichains are called $S$-Leans in \cite{mm-u}). For every gap $g$ of $S$, the set $\{0,g\}+S$ is an ideal of $S$, and so $\operatorname{G}(S)$ is embedded naturally in $\mathfrak{I}_0(S)$. Moreover, if $g'$ is another gap of $S$, then $g\le_S g'$ if and only if $\{0,g'\}+S\subseteq \{0,g\}+S$ (Lemma~\ref{lem:prec-inc-principal}). 
If we are able to characterize the ideals of the form $\{0,g\}+S$ from their properties in the poset $(\mathfrak{I}_0(S),\subseteq)$, then we can extract a poset isomorphic to $(\operatorname{G}(S),\le_S)$ and thus recover $S$. This is actually the strategy we use to prove that if the posets $(\mathfrak{I}_0(S),\subseteq)$ and $(\mathfrak{I}_0(T),\subseteq)$ are isomorphic, then $S$ and $T$ must be equal.

The ideal class monoid of a numerical semigroup is a monoid. Thus, it is natural to ask if two different numerical semigroups will have isomorphic ideal class monoids \cite[Question~6.1]{c-da-gs}. The answer is no. Theorem~\ref{thm:isomorphic-class-monoids-addition} states that if $S$ and $T$ are numerical semigroups, and their ideal class monoids are isomorphic, then $S$ and $T$ must be equal.

It order to solve the isomorphism problem for ideal class monoids of numerical semigroups, we study what are the consequences of having an isomorphism between $(\mathfrak{I}_0(S),+)$ and $(\mathfrak{I}_0(T),+)$, with $S$ and $T$ numerical semigroups. In particular, we show that over-semigroups of $S$ are in correspondence with over-semigroups of $T$, and their corresponding ideal class monoids must be isomorphic. 

Most of the computations in the examples presented in this manuscript where performed using the \texttt{GAP} \cite{GAP4} package \texttt{numericalsgps} \cite{numericalsgps}. The code used for these calculations can be found at 

\centerline{\url{https://github.com/numerical-semigroups/ideal-class-monoid}.} 

The package \texttt{numericalsgps} was also used to draw Hasse diagrams of the posets mentioned above for several numerical semigroups, providing in this way clues on what where the results needed to proof our main theorems.






\section{Determining a numerical semigroup from the order induced in its gap set}

Suppose that we are given a numerical semigroup $S$ as a sequence $\{s_0,s_1,\dots,s_n,\dots\}$ of which the only data we know is whether $s_i\le_S s_j$ for $i,j\in \mathbb{N}$. The $\nu$ sequence $\nu_i= |\{j \in \mathbb{N}: s_j\le_S s_i\}|$ completely determines $S$ (see \cite{ba}; here $|X|$ denotes the cardinality of the set $X$). Thus, if $S$ and $T$ are numerical semigroups whose respective posets $(S,\le_S)$ and $(T,\le_T)$ are isomorphic, then $S=T$.

Now, suppose that what we have is an enumeration $H=\{h_1,\dots,h_g\}$ of the gap set of $S$, $\operatorname{G}(S)$, and how these elements are arranged with respect to $\le_S$. We want to recover $S$ from this information. 

Recall that the \emph{multiplicity} of $S$ is the least positive integer in $S$. From \cite[Lemma~2.5(1)]{b-k}, we know that multiplicity of $S$ is the cardinality of $\operatorname{Minimals}_{\le_S}(H)$ plus one. The argument used in the proof of that lemma also shows that the maximal number of elements in an antichain (with respect to $\le_S$) is precisely the multiplicity of the semigroup minus one.


For $h\in H$, and inspired by the $\nu$ sequence described above, define 
\[
\operatorname{D}_H(h)=\{ h'\in H : h'\le_S h\},
\]
and set $\operatorname{nd}_H(h)=|\operatorname{D}_H(h)|$. In particular,
\begin{equation}\label{eq:minimals-nd-1}
    \operatorname{Minimals}_{\le_S}(H)=\{ h\in H : \operatorname{nd}_H(h)=1\}.
\end{equation}

As a consequence of the following result, the map $\operatorname{nd}_H$ is non-decreasing.

\begin{lemma}\label{lem:divisors-gap}
  Let $h\in H$. Then, 
  \[\operatorname{D}_H(h)=\{ h-s : s\in S\cap[0,h] \}.\]
  In particular, $\operatorname{nd}_H(h)=|S\cap[0,h]|$, and if $h'\in H$, with $h<h'$, then 
  \[
    |S\cap [h,h']| = \operatorname{nd}_H(h')-\operatorname{nd}_H(h).
\]
\end{lemma}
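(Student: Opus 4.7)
The plan is to prove the set equality $\operatorname{D}_H(h)=\{h-s : s\in S\cap[0,h]\}$ by two containments, then deduce the cardinality statement from a bijection, and finally derive the difference formula from additivity of cardinalities (using that $h,h'$ are gaps).

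For the forward inclusion, I would take $h'\in \operatorname{D}_H(h)$. By definition of $\le_S$ we have $s:=h-h'\in S$, and since $s\in S\subseteq\mathbb{N}$ we get $s\ge 0$, i.e.\ $h'\le h$ as integers. Moreover $h'\ge 0$ since $h'\in H\subseteq\mathbb{N}$, so $s=h-h'\le h$. Hence $s\in S\cap[0,h]$ and $h'=h-s$ has the required form.

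For the reverse inclusion, given $s\in S\cap[0,h]$, set $h':=h-s$. Then $h'\ge 0$ and $h-h'=s\in S$, so only $h'\notin S$ needs verification. Here I would use the key fact that $S$ is a submonoid: if $h'\in S$, then $h=h'+s\in S$, contradicting $h\in H=\mathbb{N}\setminus S$. Thus $h'\in H$ and $h'\le_S h$, giving $h'\in\operatorname{D}_H(h)$.

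The cardinality statement $\operatorname{nd}_H(h)=|S\cap[0,h]|$ then follows immediately because the map $s\mapsto h-s$ from $S\cap[0,h]$ to $\operatorname{D}_H(h)$ is visibly injective and, by the equality just proved, surjective. For the last formula, since $h<h'$ are both gaps, they do not belong to $S$, so the set $S\cap[0,h']$ decomposes as the disjoint union of $S\cap[0,h]$ and $S\cap[h,h']$; taking cardinalities and substituting $\operatorname{nd}_H$ yields the claim. The only point requiring any care is the step $h'\in S\Rightarrow h\in S$, which is just the submonoid property; everything else is bookkeeping.
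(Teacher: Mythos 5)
Your proof is correct and follows essentially the same route as the paper's: the forward inclusion unwinds the definition of $\le_S$, the reverse inclusion uses the submonoid property to see that $h-s$ cannot lie in $S$, and the cardinality claims follow from the resulting bijection and the disjointness of $S\cap[0,h]$ and $S\cap[h,h']$ (both endpoints being gaps). You simply spell out the bookkeeping that the paper leaves implicit.
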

\begin{proof}
    Take $h'\in H$. Then, $h'\le_S h$ if and only if $h-h'=s$ for some $s\in S$. Clearly, $s\in S\cap [0,h]$ and $h'=h-s$. 
    
    Now, take $t\in S\cap[0,h]$. Then, $h-t\not\in S$, since otherwise $h$ would be in $S$. So $h-t\in \operatorname{D}_H(h)$. 
    
    The second assertion follows from the first.
\end{proof}

Let $S$ be a numerical semigroup. A (finite) \emph{run of elements} in $S$ is an interval $\{s,s+1,\ldots,s+k\}$ of elements of $S$ such that $s-1\not\in S$ and $s+k+1\not\in S$. Analogously, a \emph{run of gaps} of $S$, or \emph{desert}, is an interval $\{h,h+1,\ldots,h+l\}$ of gaps of $S$ such that $h-1\in S$ and $k+l+1\in S$. Let $\operatorname{C}(S)$ be the \emph{conductor} of $S$, that is, the least integer $c$ such that $c+\mathbb{N}\subseteq S$. The numerical semigroup $S$ can be expressed as $S=S_0\cup S_1\cup\dots \cup S_r \cup \operatorname{C}(S)+\mathbb{N}$, such that $\max(S_i)+1<\min(S_{i+1})$  for all $i$, that is, all the elements in $S_i$ are smaller than those in $S_{i+1}$ and there is at least a gap of $S$ between these two sets. If $S\neq\mathbb{N}$, then $S_0=\{0\}$.
 

\begin{theorem}\label{th:description-runs-elements}
Let $S$ be a numerical semigroup, $S\neq \mathbb{N}$.
\begin{enumerate}[(1)]
    \item If $R$ is a run of gaps and $h\in R$, then 
    \[
        R=\{h'\in H : \operatorname{nd}_H(h)=\operatorname{nd}_H(h')\}.
    \]

    \item If $R$ is a run of elements of $S$, with $R\neq \{0\}$, then
    \[
        R=\{\operatorname{nd}_H(h)+d,\operatorname{nd}_H(h)+d+1,\dots,\operatorname{nd}_H(h')+d-1\},
    \]
    with $h=\min(R)-1$, $h'=\max(R)+1$ and $d=|\{g \in \operatorname{G}(S) : \operatorname{nd}_H(g)\le \operatorname{nd}_H(h)\}|$.
\end{enumerate}
\end{theorem}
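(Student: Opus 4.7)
The whole theorem rests on the counting interpretation given by Lemma~\ref{lem:divisors-gap}: $\operatorname{nd}_H(h) = |S\cap[0,h]|$, and more generally $\operatorname{nd}_H(h')-\operatorname{nd}_H(h) = |S\cap[h,h']|$ when $h<h'$ are gaps. The plan is to translate the combinatorics of runs into counting statements about $S \cap [0,\cdot]$ and then invoke this lemma.

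For part (1), I would argue as follows. If $h, h'$ are gaps lying in the same run $R$, say $h \le h'$, then every integer in $[h,h']$ is a gap, so $|S\cap[h,h']|=0$, and the second assertion of Lemma~\ref{lem:divisors-gap} gives $\operatorname{nd}_H(h)=\operatorname{nd}_H(h')$. Conversely, if $h,h'$ are gaps with $h<h'$ and $\operatorname{nd}_H(h)=\operatorname{nd}_H(h')$, then $|S\cap[h,h']|=0$, so every integer strictly between $h$ and $h'$ is a gap; together with $h,h'$ themselves, these sit inside a single run of gaps. This establishes the set equality in (1). No real obstacle here; the argument is essentially bookkeeping with Lemma~\ref{lem:divisors-gap}.

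For part (2), let $R=\{s,s+1,\dots,s+k\}$, $h=s-1$, $h'=s+k+1$; both $h,h'$ are gaps by definition of a run. The formula claims $\min(R)=\operatorname{nd}_H(h)+d$ and $\max(R)=\operatorname{nd}_H(h')+d-1$, with $R$ a contiguous interval, so once I establish those two endpoints I am done. The main step is to reinterpret $d$. I would first show that for a gap $g$, the condition $\operatorname{nd}_H(g)\le \operatorname{nd}_H(h)$ is equivalent to $g\le h$: if $g\le h$ this is immediate from $\operatorname{nd}_H$ being increasing, while if $g>h$ then either $g\in R$ (impossible, since $g$ is a gap) or $g\ge h'$, in which case $\{s,\dots,s+k\}\subseteq S\cap[0,g]$ forces $\operatorname{nd}_H(g)\ge \operatorname{nd}_H(h)+k+1$. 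Consequently $d=|\operatorname{G}(S)\cap[0,h]|$.

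With this in hand, the identity
\[
\operatorname{nd}_H(h)+d = |S\cap[0,h]|+|\operatorname{G}(S)\cap[0,h]| = h+1 = s = \min(R)
\]
gives the first endpoint, and since $\operatorname{nd}_H(h')=\operatorname{nd}_H(h)+k+1$ (because exactly the $k+1$ elements of $R$ are added to $S\cap[0,h']$ compared to $S\cap[0,h]$), one obtains $\operatorname{nd}_H(h')+d-1 = s+k = \max(R)$. I expect the only slightly delicate point is the reinterpretation of $d$ as a count of gaps below $h$, since that is where the two-sided nature of the run (both $h$ and $h'$ being gaps) really enters; the rest is a direct calculation combining Lemma~\ref{lem:divisors-gap} with the trivial identity $|S\cap[0,n]|+|\operatorname{G}(S)\cap[0,n]|=n+1$.
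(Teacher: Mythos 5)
Your proposal is correct and follows essentially the same route as the paper: both arguments reduce everything to Lemma~\ref{lem:divisors-gap}, identify $d$ with $|\operatorname{G}(S)\cap[0,h]|$, obtain $\min(R)=\operatorname{nd}_H(h)+d$ from the identity $h+1=|S\cap[0,h]|+|\operatorname{G}(S)\cap[0,h]|$, and get the right endpoint from $|R|=\operatorname{nd}_H(h')-\operatorname{nd}_H(h)$. Your justification that $\operatorname{nd}_H(g)\le\operatorname{nd}_H(h)$ is equivalent to $g\le h$ for gaps $g$ is just a more explicit version of the step the paper dispatches with ``from the previous paragraph, we deduce that $d=|H\cap[0,h]|$.''
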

\begin{proof}
    Set $H=\mathbb{N}\setminus S$, and let $R$ be a run of gaps. By Lemma~\ref{lem:divisors-gap}, the map $\operatorname{nd}_H$ is non-decreasing and it is constant when restricted to a desert. Moreover, two gaps $h$ and $h'$ are in the same desert if and only if $\operatorname{nd}_H(h)=\operatorname{nd}_H(h')$. Therefore, the first assertion follows.
    
    Now, let $R$ be a run of elements of $S$ with $h$ and $h'$ as in the hypothesis of the second assertion. Let $r=\min(R)$, and so $h=r-1$. From the previous paragraph, we deduce that $d=|H\cap [0,h]|$. Let $t=|S\cap [0,h]|$. Clearly, $h+1=d+t$, and thus $r=d+t$. 
    By Lemma~\ref{lem:divisors-gap}, $t=|S\cap [0,h]|=\operatorname{nd}_H(h)$. 
    Finally, by using again Lemma~\ref{lem:divisors-gap}, we have that $|R|=|S\cap [h,h']|=\operatorname{nd}_H(h')-\operatorname{nd}_H(h)$, and this completes the proof by taking into account that $R$ is an interval of elements in $S$.
\end{proof}

\begin{remark}\label{rem:construct-from-gaps}
    Notice that Theorem~\ref{th:description-runs-elements} is telling us that if we know $\operatorname{nd}_H:H\to \mathbb{N}$, then we can fully reconstruct $S$.
\end{remark}

\begin{example}
Assume that $H=\{g_1,\dots,g_8\}$ with $\operatorname{nd}_H(g_1)=\dots=\operatorname{nd}_H(g_4)=1$;  $\operatorname{nd}_H(g_5)=2$; $\operatorname{nd}_H(g_6)=\operatorname{nd}_H(g_7)=3$; and  $\operatorname{nd}_H(g_8)=6$. 

By \cite[Lemma~2.5(1)]{b-k} and \eqref{eq:minimals-nd-1}, from $\operatorname{nd}_H(g_1)= \dots = \operatorname{nd}_H(g_4)=1$, we know that the multiplicity of $S$ is five. In light of Theorem~\ref{th:description-runs-elements}, $S_1=\{5\}=\{1+4,\dots,2+4-1\}$, $S_2=\{7\}=\{2+5,\dots,3+5-1\}$, $S_3=\{10,11,12\}=\{3+7,\ldots,6+7-1\}$, and the last desert is $\{13\}$, since there is only one gap having $\operatorname{nd}_H(h)=6$. Thus, $S=\{0,5,7,10,11,12\}\cup (14+\mathbb{N})$.


\end{example}

\begin{corollary}\label{cor:gap-poset-isomorphic}
    Let $S$ and $T$ be numerical semigroups. If the posets $(\operatorname{G}(S),\le_S)$ and $(\operatorname{G}(T),\le_T)$ are isomorphic, then $S=T$.
\end{corollary}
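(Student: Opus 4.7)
The plan is to observe that the quantity $\operatorname{nd}_H(h)$ is a purely order-theoretic invariant of the poset $(\operatorname{G}(S), \le_S)$, and then to invoke the reconstruction result just proved.

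First, I would fix a poset isomorphism $\phi \colon (\operatorname{G}(S), \le_S) \to (\operatorname{G}(T), \le_T)$, and unpack the definition of $\operatorname{D}_H$. For any $h \in \operatorname{G}(S)$, the set $\operatorname{D}_{\operatorname{G}(S)}(h) = \{ h' \in \operatorname{G}(S) : h' \le_S h\}$ is nothing but the principal order ideal of the poset $(\operatorname{G}(S),\le_S)$ generated by $h$. Since $\phi$ is an order isomorphism, it restricts to a bijection $\operatorname{D}_{\operatorname{G}(S)}(h) \to \operatorname{D}_{\operatorname{G}(T)}(\phi(h))$, and in particular $\operatorname{nd}_{\operatorname{G}(S)}(h) = \operatorname{nd}_{\operatorname{G}(T)}(\phi(h))$ for every $h \in \operatorname{G}(S)$.

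Second, I would transfer this to a statement about multisets. Running $h$ over $\operatorname{G}(S)$, the multiset $\{\!\{ \operatorname{nd}_{\operatorname{G}(S)}(h) : h \in \operatorname{G}(S)\}\!\}$ coincides with $\{\!\{ \operatorname{nd}_{\operatorname{G}(T)}(t) : t \in \operatorname{G}(T)\}\!\}$. By Lemma~\ref{lem:divisors-gap}, the function $\operatorname{nd}_H$ is non-decreasing along the natural order of $\mathbb{N}$, so sorting this common multiset in non-decreasing order assigns to each gap (in its natural position in $\mathbb{N}$) a well-defined value of $\operatorname{nd}_H$. Consecutive equal values produce the deserts of $S$, by Theorem~\ref{th:description-runs-elements}(1), and the runs of elements between deserts are then determined by Theorem~\ref{th:description-runs-elements}(2): their lengths come from successive differences of the $\operatorname{nd}_H$-values at the flanking gaps, and their positions come from the count of preceding gaps. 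Finally, the conductor places the tail $\operatorname{C}(S) + \mathbb{N}$.

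Thus the same multiset reconstructs exactly one numerical semigroup, via the algorithm in Remark~\ref{rem:construct-from-gaps}, forcing $S = T$. The only step that requires any care is the verification that the multiset of $\operatorname{nd}_H$-values, rather than the function $\operatorname{nd}_H \colon H \to \mathbb{N}$ with its domain, is enough information; this is ensured by the monotonicity of $\operatorname{nd}_H$ with respect to the natural order of $\mathbb{N}$ on $H$, which lets us read off the sequence of deserts from the sorted multiset. Everything else is a direct application of the preceding results.
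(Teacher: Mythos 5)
Your proof is correct and follows exactly the route the paper intends: $\operatorname{nd}_H$ is an order-theoretic invariant (the size of a principal down-set), its multiset of values is therefore shared by $S$ and $T$, and by monotonicity this multiset determines the function on the naturally ordered gaps, so Theorem~\ref{th:description-runs-elements} and Remark~\ref{rem:construct-from-gaps} reconstruct the same semigroup from both. The paper leaves the corollary without an explicit proof precisely because this is the argument; your version just spells out the (worthwhile) detail that the multiset suffices.
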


Let $\operatorname{PF}(S)=\operatorname{Maximals}_{\le_S}(\mathbb{Z}\setminus S)$, which is known as the set of \emph{pseudo-Frobenius numbers} of $S$. The cardinality of $\operatorname{PF}(S)$ is the \emph{type} of $S$, denoted $\operatorname{t}(S)$. 
The \emph{Frobenius number} of $S$, defined as $\operatorname{F}(S)=\max(\mathbb{Z}\setminus S)$, is always a pseudo-Frobenius number, and so the type of a numerical semigroup is a positive integer. Clearly, $\operatorname{C}(S)=\operatorname{F}(S)+1$.

Notice that if we consider the Hasse diagram of $(\operatorname{G}(S),\le_S)$ as an undirected graph, then this graph has at most $\operatorname{t}(S)$ connected components.

\begin{example}
    Let $H=\{1, 2, 3, 4, 5, 9, 10 \}$, which is the set of gaps of $S=\{ 0, 6, 7, 8\}\cup (11+\mathbb{N})$. Then the Hasse diagram of $H$ with respect to $\le_S$ looks like this:
\begin{center}
\includegraphics[scale=0.5]{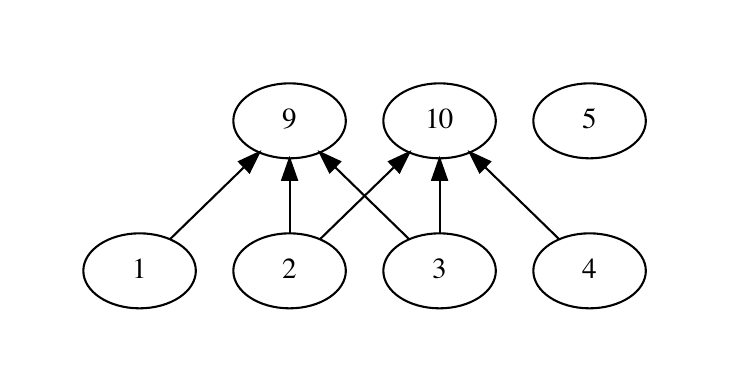}
\end{center}

The type of $S$ is three and the undirected graph has two connected components.
\end{example}


Recall that an affine semigroup is a finitely generated submonoid of $(\mathbb{N}^n,+)$ for some positive integer $n$. The poset of the set of gaps does not uniquely determine an affine semigroup as the following example shows. 

\begin{example}
The affine semigroups $\mathbb{N}^2\setminus\{(1,0),(0,1)\}$ and $\mathbb{N}^2\setminus\{(0,1),(0,2)\}$ have isomorphic posets of gaps, but the first is minimally generated by seven elements, while the second is by six, and thus they cannot be isomorphic. 
\end{example}

\section{The poset of normalized ideals of a numerical semigroup under inclusion}

Let $S$ be a numerical semigroup. 
Recall that the set of normalized ideals of $S$ is 
\[
\mathfrak{I}_0(S)=\{ I \subseteq \mathbb{N} : I + S \subseteq I, \min(I)=0\}.
\]

For $I\in \mathfrak{I}_0(S)$, set $I^*=I\setminus\{0\}$; in particular, $S^*=S\setminus\{0\}$.



\begin{lemma}\label{lem:prec-inc-principal}
    Let $S$ be a numerical semigroup and let $g$ and $g'$ be gaps of $S$. Then, $g\le_S g'$ if and only if $\{0,g'\}+S\subseteq \{0,g\}+S$.
\end{lemma}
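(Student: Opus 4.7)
The plan is to prove both implications directly from the definitions, essentially by tracking where $g'$ lands in the set $\{0,g\}+S$.

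For the forward direction, I would assume $g\le_S g'$, i.e.\ $g'-g\in S$. Writing $g'=g+s$ with $s\in S$, one gets $g'+S=g+s+S\subseteq g+S$ since $s+S\subseteq S$. Combining this with the trivial inclusion $S\subseteq S$ gives
\[
\{0,g'\}+S = S\cup (g'+S)\subseteq S\cup (g+S) = \{0,g\}+S,
\]
as desired. No use of the fact that $g,g'$ are gaps is needed in this direction.

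For the converse, I would assume $\{0,g'\}+S\subseteq \{0,g\}+S$ and evaluate the inclusion at the element $g' = g'+0$ (here $0\in S$ because $S$ is a numerical semigroup). This yields $g'\in \{0,g\}+S = S\cup (g+S)$. Since $g'$ is a gap, $g'\notin S$, hence $g'\in g+S$. Thus $g'-g\in S$, i.e.\ $g\le_S g'$.

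The argument is almost entirely bookkeeping; the only place where any care is needed is in the converse, where the hypothesis that $g'$ is a gap is what prevents the degenerate escape $g'\in S$ (which would not, on its own, give $g'-g\in S$). The role of $g$ being a gap is not used at all, but the statement naturally assumes both $g$ and $g'$ are gaps since the claim concerns the induced poset structure on $\operatorname{G}(S)$. I do not anticipate any genuine obstacle here; this lemma is really just the translation between the order $\le_S$ on gaps and the inclusion order on the principal two-generator ideals $\{0,g\}+S$, which the paper needs in order to embed $(\operatorname{G}(S),\le_S)$ into $(\mathfrak{I}_0(S),\subseteq)$.
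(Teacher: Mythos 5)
Your proof is correct and follows essentially the same route as the paper's: the paper compresses both directions into a single chain of equivalences ($\{0,g'\}+S\subseteq\{0,g\}+S$ iff $g'\in\{0,g\}+S$ iff $g'=g+s$ for some $s\in S$), implicitly using that $g'\notin S$ at the last step, which you correctly make explicit. No differences of substance.
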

\begin{proof}
    Notice that $\{0,g'\}+S\subseteq \{0,g\}+S$ if and only if $g'\in \{0,g\}+S$, or equivalently, $g'=g+s$ for some $s\in S$, and this means that $g\le_S g'$.
\end{proof}

Define
\[\mathfrak{P}_0(S)=\left\{\{0,g\}+S : g\in H\right\}.\]
In light of Lemma~\ref{lem:prec-inc-principal}, the poset $(\mathfrak{P}_0(S),\supseteq)$ is isomorphic to $(H,\le_S)$.
Thus, if we find a way to recover the set $\mathfrak{P}_0(S)$ from $(\mathfrak{I}_0(S),\subseteq)$, we will be able to recover $S$ from $(\mathfrak{I}_0(S),\subseteq)$ by using Remark~\ref{rem:construct-from-gaps}. 

Observe that for $I\in\mathfrak{I}_0(S)$, we have that $I=\operatorname{Minimals}_{\le_S}(I)+S$, and that every $X\subseteq N$, for which $I=X+S$ holds, contains $\operatorname{Minimals}_{\le_S}(I)$. Thus, $\operatorname{Minimals}_{\le_S}(I)$ is a minimal generating system of $I$ and it is included in $\operatorname{G}(S)\cup\{0\}$. Also, $\operatorname{Minimals}_{\le_S}(I)=I\setminus (I+S^*)$. The elements of $\operatorname{Minimals}_{\le_S}(I)$ are called the \emph{minimal generators} of $I$.

\begin{lemma}\label{lem:remove-min-gen}
    Let $S$ be a numerical semigroup and let $I\in \mathfrak{I}_0(S)$. For $x\in I^*$, the set $I\setminus\{x\}\in \mathfrak{I}_0(S)$ if and only if $x$ is a minimal generator of $I$. 
\end{lemma}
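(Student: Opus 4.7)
The plan is to prove both implications directly by unwinding the definition of a normalized ideal together with the characterization $\operatorname{Minimals}_{\le_S}(I) = I \setminus (I + S^*)$ noted just before the lemma.

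For the ``if'' direction, I would assume $x$ is a minimal generator of $I$ and verify that $I\setminus\{x\}$ meets all three conditions for membership in $\mathfrak{I}_0(S)$: it is contained in $\mathbb{N}$ (trivially, since $I$ is); it has minimum $0$ because $x \in I^*$ forces $x\neq 0$, so $0$ remains; and it is closed under translation by $S$. The last point is where the minimal-generator hypothesis is used: given $y \in I\setminus\{x\}$ and $s \in S$, we have $y+s \in I$ since $I$ is an ideal. If $s = 0$ then $y+s = y \neq x$, and if $s \in S^*$ then $y+s \in I + S^*$, so $y+s$ cannot equal $x$ without violating the characterization of $\operatorname{Minimals}_{\le_S}(I)$ as $I\setminus(I+S^*)$. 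Either way $y+s \in I\setminus\{x\}$.

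For the ``only if'' direction, I would argue by contrapositive: suppose $x$ is not a minimal generator, so $x \in I + S^*$, meaning $x = y + s$ for some $y \in I$ and some $s \in S^*$. Since $s > 0$ we have $y < x$, so in particular $y \neq x$, giving $y \in I\setminus\{x\}$. If $I\setminus\{x\}$ were an ideal of $S$, we would have $y + s \in I\setminus\{x\}$; but $y + s = x \notin I\setminus\{x\}$, a contradiction.

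There is no real obstacle here — the lemma is essentially a reformulation of the identity $\operatorname{Minimals}_{\le_S}(I) = I \setminus (I + S^*)$ stated in the preceding paragraph, and both directions reduce to a single line of bookkeeping. The only small thing to be careful about is ensuring that the element $y$ produced in the ``only if'' direction is genuinely distinct from $x$, which is immediate from $s\in S^*$ and the fact that all elements considered live in $\mathbb{N}$.
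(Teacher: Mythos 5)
Your proof is correct and follows essentially the same route as the paper: the ``only if'' direction is the same contrapositive argument (writing $x=y+s$ with $s\in S^*$ and observing that $y$ survives in $I\setminus\{x\}$ while $y+s$ does not), and your ``if'' direction is just a direct verification where the paper argues by contradiction, both hinging on the identity $\operatorname{Minimals}_{\le_S}(I)=I\setminus(I+S^*)$.
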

\begin{proof}
    If $x\not\in\operatorname{Minimals}_{\le_S}(I)$, then $x=g+s$, with $g\in\operatorname{Minimals}_{\le_S}(I)$ and $s\in S^*$. Hence, $x=g+s\not\in I\setminus\{x\}$, and consequently $I\setminus\{x\}$ is not an ideal of $S$.

    If $I\setminus\{x\}$, with $x \in \operatorname{Minimals}_{\le_S}(I)$, is not an ideal (notice that $0=\min(I\setminus\{x\})$), then there exists $y\in I\setminus\{x\}$ and $s\in S$, such that $y+s\not\in I\setminus\{x\}$. But $y\in I$, and thus $y+s\in I$, which forces $y\neq y+s=x$, contradicting that $x$ is minimal in $I$ with respect to $\le_S$.
\end{proof}

Given two ideals $I$ and $J$ in $\mathfrak{I}_0(S)$ we say that $I$ \emph{covers} $J$ if $J\subsetneq I$ and there is no other $K\in \mathfrak{I}_0(S)$ such that $J\subsetneq K\subsetneq I$.

\begin{lemma}\label{lem:covers-dif-one}
    Let $S$ be a numerical semigroup and let $I,J\in \mathfrak{I}_0(S)$, with $J\subsetneq I$. Then, $I$ covers $J$ if and only if $|I\setminus J|=1$.
\end{lemma}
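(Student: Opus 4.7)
The plan is to derive both implications almost directly from Lemma~\ref{lem:remove-min-gen}, with the only real ingredient being the observation that a strict ideal inclusion $J\subsetneq I$ forces some minimal generator of $I$ to lie outside $J$.

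For the easy direction ($\Leftarrow$), I would note that if $I\setminus J=\{x\}$ and $K\in\mathfrak{I}_0(S)$ satisfies $J\subseteq K\subseteq I$, then $K$ either fails to contain $x$ (in which case $K=J$) or contains $x$ (in which case $K=I$); hence no strictly intermediate ideal exists, so $I$ covers $J$.

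For the forward direction ($\Rightarrow$), I would argue the contrapositive: assume $|I\setminus J|\ge 2$ and construct an ideal strictly between them. The key claim is that there is a minimal generator $x$ of $I$ with $x\notin J$. Otherwise $\operatorname{Minimals}_{\le_S}(I)\subseteq J$, and since $J$ is an ideal and $I=\operatorname{Minimals}_{\le_S}(I)+S$, we would get $I\subseteq J+S\subseteq J$, contradicting $J\subsetneq I$. Pick such an $x$; by Lemma~\ref{lem:remove-min-gen}, $K:=I\setminus\{x\}$ is again in $\mathfrak{I}_0(S)$. Clearly $K\subsetneq I$, and since $x\notin J$ we have $J\subseteq K$. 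Because $|I\setminus J|\ge 2$ while $|I\setminus K|=1$, the inclusion $J\subseteq K$ is strict, so $J\subsetneq K\subsetneq I$, contradicting that $I$ covers $J$.

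There is no real obstacle here; the only subtlety worth flagging is the need to locate a minimal generator outside $J$, which is the reason Lemma~\ref{lem:remove-min-gen} applies and gives an ideal rather than just a set. Everything else is bookkeeping on cardinalities.
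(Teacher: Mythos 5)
Your proof is correct, and the forward direction takes a genuinely different route from the paper's. Where you cut down from the top — locating a minimal generator $x$ of $I$ outside $J$ (which exists, as you rightly argue, since otherwise $I=\operatorname{Minimals}_{\le_S}(I)+S\subseteq J+S\subseteq J$) and invoking Lemma~\ref{lem:remove-min-gen} to get the intermediate ideal $I\setminus\{x\}$ — the paper instead builds up from the bottom: it sets $m=\max(I\setminus J)$ and checks directly that $J\cup\{m\}$ is an ideal, the point being that any $m+s$ with $s\in S^*$ exceeds $m$ and hence already lies in $J$. The paper's construction is self-contained and needs no prior lemma; yours reuses Lemma~\ref{lem:remove-min-gen} and makes explicit the link between covering relations and deleting minimal generators, which is exactly the link the paper exploits afterwards in Lemma~\ref{lem:number-covered}. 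One small point worth stating explicitly in your version: the minimal generator $x$ you pick is automatically in $I^*$ (as required by Lemma~\ref{lem:remove-min-gen}) because $0\in J$ and $x\notin J$. Your backward direction is the same counting argument as the paper's, phrased contrapositively.
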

\begin{proof}
    Suppose that there is $K\in \mathfrak{I}_0(S)$ such that $J\subsetneq K\subsetneq I$. Take $x\in I\setminus K$ and $y\in K\setminus J$. Then, $x\neq y$ and $x,y \in I\setminus J$, which forces $|I\setminus J|\ge 2$.

    For the converse, suppose that $|I\setminus J|\ge 2$. Let $m=\max(I\setminus J)$. Then, $J\subsetneq J\cup\{m\}\subsetneq I$. Let $x\in J\cup\{m\}$ and $s\in S$. If $x\in J$, then $x+s\in J\subset J\cup\{m\}$; if $x=m$ and $s=0$, then $x+s=m\in J\cup\{m\}$; if $x=m$ and $s\in S^*$, then $m<x+s\in I$, which implies that $x+s\in J\subset J\cup\{m\}$. Thus, $J\cup\{m\}\in\mathfrak{I}_0(S)$ and consequently $I$ does not cover $J$.
\end{proof}


\begin{lemma}\label{lem:number-covered}
    Let $S$ be a numerical semigroup and let $I\in \mathfrak{I}_0(S)$. Then, the number of ideals in $\mathfrak{I}_0(S)$ covered by $I$ equals the number of non-zero minimal generators of $I$.
\end{lemma}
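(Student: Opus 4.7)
The plan is to combine Lemma~\ref{lem:remove-min-gen} and Lemma~\ref{lem:covers-dif-one} to obtain an explicit bijection between the ideals covered by $I$ and the non-zero minimal generators of $I$.

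First I would observe that if $J\in\mathfrak{I}_0(S)$ is covered by $I$, then by Lemma~\ref{lem:covers-dif-one} we have $|I\setminus J|=1$, say $I\setminus J=\{x\}$, so $J=I\setminus\{x\}$. Since $J\in\mathfrak{I}_0(S)$ we need $\min(J)=0$, forcing $x\neq 0$, so $x\in I^*$. By Lemma~\ref{lem:remove-min-gen}, the fact that $I\setminus\{x\}\in\mathfrak{I}_0(S)$ then forces $x$ to be a minimal generator of $I$, and clearly $x\neq 0$.

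Conversely, if $x$ is a non-zero minimal generator of $I$, then by Lemma~\ref{lem:remove-min-gen} we have $J:=I\setminus\{x\}\in\mathfrak{I}_0(S)$, and since $|I\setminus J|=1$, Lemma~\ref{lem:covers-dif-one} implies that $I$ covers $J$.

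Thus the assignment $x\mapsto I\setminus\{x\}$ is a well-defined map from the set of non-zero minimal generators of $I$ to the set of ideals in $\mathfrak{I}_0(S)$ covered by $I$, and the previous two paragraphs show that it is both surjective and injective (injectivity being immediate, since $I\setminus\{x\}=I\setminus\{x'\}$ forces $x=x'$). This yields the claimed equality of cardinalities. There is no real obstacle here; the proof is essentially a direct combination of the two preceding lemmas.
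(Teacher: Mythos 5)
Your proof is correct and is exactly the argument the paper intends: the paper's proof of this lemma is the one-line remark that it ``easily follows from Lemmas~\ref{lem:remove-min-gen} and \ref{lem:covers-dif-one}'', and you have simply written out the bijection $x\mapsto I\setminus\{x\}$ that those two lemmas provide.
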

\begin{proof}
    The proof easily follows from Lemmas~\ref{lem:remove-min-gen} and \ref{lem:covers-dif-one}.
\end{proof}

\begin{theorem}
    Let $S$ and $T$ be numerical semigroups. If $(\mathfrak{I}_0(S),\subseteq)$ and $(\mathfrak{I}_0(T),\subseteq)$ are isomorphic, then $S=T$.
\end{theorem}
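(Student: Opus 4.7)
The plan is to extract the poset $(\operatorname{G}(S), \le_S)$ from $(\mathfrak{I}_0(S), \subseteq)$ by purely order-theoretic means, and then invoke Corollary~\ref{cor:gap-poset-isomorphic}. Specifically, I will identify the subset $\mathfrak{P}_0(S)=\{\{0,g\}+S : g\in H\}$ inside $\mathfrak{I}_0(S)$ using only the covering relation of the inclusion order.

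The central claim to be established is that an ideal $I\in\mathfrak{I}_0(S)$ lies in $\mathfrak{P}_0(S)$ if and only if $I$ covers exactly one element of $(\mathfrak{I}_0(S),\subseteq)$. The forward direction is immediate from Lemma~\ref{lem:number-covered}, since $\{0,g\}+S$ with $g\in H$ has exactly one non-zero minimal generator, namely $g$. For the converse, Lemma~\ref{lem:number-covered} forces any $I$ that covers exactly one element to have a single non-zero minimal generator $x$, whence $I=\{0,x\}+S$. To see that $x\in H$, note that $x\in S$ would make $I=S$, and $S$ has no non-zero minimal generators (its minimal generating system is $\{0\}$), contradicting that $I$ covers one element.

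Given a poset isomorphism $\phi:(\mathfrak{I}_0(S),\subseteq)\to(\mathfrak{I}_0(T),\subseteq)$, the covering relation and hence the property ``covers exactly one element'' are preserved, so $\phi$ restricts to a poset isomorphism $(\mathfrak{P}_0(S),\subseteq)\cong(\mathfrak{P}_0(T),\subseteq)$, which is the same thing as $(\mathfrak{P}_0(S),\supseteq)\cong(\mathfrak{P}_0(T),\supseteq)$. Composing this with the order isomorphisms $(\operatorname{G}(S),\le_S)\cong(\mathfrak{P}_0(S),\supseteq)$ and $(\operatorname{G}(T),\le_T)\cong(\mathfrak{P}_0(T),\supseteq)$ furnished by Lemma~\ref{lem:prec-inc-principal} yields $(\operatorname{G}(S),\le_S)\cong(\operatorname{G}(T),\le_T)$, and Corollary~\ref{cor:gap-poset-isomorphic} then concludes $S=T$.

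The only substantive step is the covering characterization of $\mathfrak{P}_0(S)$; everything else is formal bookkeeping. The degenerate case $S=\mathbb{N}$ (where $\mathfrak{P}_0(S)$ is empty) deserves a brief side remark: there $|\mathfrak{I}_0(S)|=1$, and any gap of $T$ would produce a normalized ideal $\{0,g\}+T$ distinct from $T$, so the isomorphism hypothesis forces $T=\mathbb{N}$ directly.
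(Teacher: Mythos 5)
Your proposal is correct and takes essentially the same route as the paper: both identify $\mathfrak{P}_0(S)$ as the set of ideals covering exactly one element via Lemma~\ref{lem:number-covered}, restrict the order isomorphism to these, and pass through Lemma~\ref{lem:prec-inc-principal} and Corollary~\ref{cor:gap-poset-isomorphic}. Your write-up merely spells out two points the paper leaves implicit, namely the converse direction of the covering characterization and the degenerate case $S=\mathbb{N}$.
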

\begin{proof}
    Notice that by Lemma~\ref{lem:number-covered}, $\mathfrak{P}_0(S)$ is precisely the set of ideals in $\mathfrak{I}_0(S)$ that cover exactly one ideal in $\mathfrak{I}_0(S)$. Hence, the isomorphism between $(\mathfrak{I}_0(S),\subseteq)$ and $(\mathfrak{I}_0(T),\subseteq)$ restricted to $(\mathfrak{P}_0(S),\subseteq)$ and $(\mathfrak{P}_0(T),\subseteq)$, yields, by Lemma~\ref{lem:prec-inc-principal}, an isomorphism between $(\operatorname{G}(S),\le_S)$ and $(\operatorname{G}(T),\le_T)$. By Corollary~\ref{cor:gap-poset-isomorphic}, we conclude that $S=T$.
\end{proof}

\section{Isomorphic ideal class monoids}

In this section, we prove that if $S$ and $T$ are numerical semigroups, then the existence of an isomorphism between $(\mathfrak{I}_0(S),+)$ and $(\mathfrak{I}_0(T),+)$ forces $S$ and $T$ to be equal. 

We start by proving that some notable elements of the ideal class monoid of a numerical semigroup are preserved under isomorphisms. To this end, we recall some definitions given in \cite[Section~5]{c-da-gs}; for the definitions on a general monoid, please refer to \cite{t}. 

Given $I,J\in \mathfrak{I}_0(S)$, we write $I\preceq J$ if there exists $K\in \mathfrak{I}_0(S)$ such that $I+K=J$. We use the notation $I\prec J$ when $I\preceq J$ and $I\neq J$ (in general this is not the usual definition, though in \cite[Section~5]{c-da-gs} it is shown that in our setting the usual definition is equivalent to this one). 

We say that $I\in \mathfrak{I}_0(S)$ is \emph{irreducible} if $I\neq J+K$ for all $J,K\in\mathfrak{I}_0(S)\setminus\{S\}$ such that $J\prec I$ and $K\prec I$. By \cite[Lemma~5.4]{c-da-gs}, $I$ is irreducible if and only if $I\neq J+K$ for all $J,K\in\mathfrak{I}_0(S)\setminus \{I\}$. Irreducible elements are important since they generate $(\mathfrak{I}_0(S),+)$ as a monoid \cite[Proposition~5.5]{c-da-gs}. Clearly, if $f:\mathfrak{I}_0(S)\to \mathfrak{I}_0(T)$ is a monoid isomorphism, then it sends irreducible elements to irreducible elements.

An ideal $I\in \mathfrak{I}_0(S)$ is a \emph{quark} if there is no ideal $J\in\mathfrak{I}_0(S)\setminus\{S\}$ such that $J\prec I$, that is, there is no $J\in\mathfrak{I}_0(S)\setminus\{I,S\}$ and $K\in \mathfrak{I}_0(S)$ such that $I=J+K$. Every quark is irreducible but the converse does not hold in general (see for instance \cite[Example~5.3]{c-da-gs}). Again, quarks go to quarks under monoid isomorphisms of ideal class monoids. 

The concepts of over-semigroup and irreducible numerical semigroup are crucial in the proof of the main result of this section. So, next we spend some time recalling the basic facts associated to these notions.

Let $S$ and $T$ be numerical semigroups. We say that $T$ is an \emph{over-semigroup} of $S$ if $S\subseteq T$. By \cite[Proposition~5.14]{c-da-gs},  $T$ is an idempotent of $\mathfrak{I}_0(S)$ if and only if $T$ is an over-semigroup of $S$. Denote by $\mathcal{O}(S)$ the set of over-semigroups of $S$. Then, $(\mathcal{O}(S),+)$ is a submonoid of $(\mathfrak{I}_0(S),+)$.

If $T$ is an over-semigroup of $S$ with $|T\setminus S|=1$, then we say that $T$ is a \emph{unitary extension} of $S$. In this setting, $T=S\cup\{x\}$, and $x$ must be a \emph{special gap} of $S$, that is $x\in\operatorname{FP}(S)$ and $2x,3x\in S$. The set of special gaps of $S$ is denoted by $\operatorname{SG}(S)$ and its cardinality coincides with the set of unitary extensions of $S$ (see for instance \cite[Section~3.3]{ns}). 

A numerical semigroup $S$ is \emph{irreducible} if $S$ cannot be expressed as the intersection of two numerical semigroups properly containing $S$. Every irreducible numerical semigroup is either symmetric or pseudo-symmetric. A numerical semigroup $S$ is \emph{symmetric} if for every $z\in \mathbb{Z}\setminus S$, the integer $\operatorname{F}(S)-z$ is in $S$. And it is \emph{pseudo-symmetric} if $\operatorname{F}(S)$ is even and for every $z\in \mathbb{Z}\setminus(S\cup\{\operatorname{F}(S)/2\})$, we have that $\operatorname{F}(S)-z\in S$. If $S$ is not irreducible, then it can be expressed as the intersection of finitely many irreducible over-semigroups of $S$ (for basic characterizations of irreducibility, symmetry and pseudo-symmetry, see \cite[Chapter~2]{ns-ap} or \cite[Chapter~3]{ns}). 

It is well known that $S$ is irreducible if and only if the cardinality of $\operatorname{SG}(S)$ is at most one \cite[Corollary~4.38]{ns}. If $S\neq \mathbb{N}$, then $\operatorname{F}(S)\in \operatorname{SG}(S)$. Thus, for $S\neq \mathbb{N}$, if $S$ is irreducible, then $S\cup\{\operatorname{F}(S)\}$ is the only unitary-extension of $S$, and thus every proper over-semigroup of $S$ contains $S\cup\{\operatorname{F}(S)\}$.

Quarks are relevant since they can be used to decide if the semigroup is symmetric or pseudo-symmetric, and thus to determine if the semigroup is irreducible; see Propositions~5.17 and 5.19, and Theorem~5.20 in \cite{c-da-gs}. Unitary extensions of a numerical semigroup $S$ are precisely the idempotent quarks of $\mathfrak{I}_0(S)$ \cite[Propostion~5.13]{c-da-gs}.

For every idempotent $E\in \mathfrak{I}_0(S)$, define 
\[
C_E=\{ I\in \mathfrak{I}_0(S) : I+E=I\}.
\]
This definition is inspired by \cite[Section~2]{g-z}.

\begin{proposition}\label{prop:CT-IT}
    Let $S$ be a numerical semigroup, and let $T$ be an over-semigroup of $S$. Then, \[C_T=\mathfrak{I}_0(T).\]
\end{proposition}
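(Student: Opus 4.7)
The plan is to prove both inclusions $\mathfrak{I}_0(T)\subseteq C_T$ and $C_T\subseteq \mathfrak{I}_0(T)$ directly from the definitions, with the over-semigroup hypothesis $S\subseteq T$ as the only non-trivial ingredient.

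For the inclusion $\mathfrak{I}_0(T)\subseteq C_T$, I would take $I\in \mathfrak{I}_0(T)$ and first check that $I$ lands in $\mathfrak{I}_0(S)$ at all: since $S\subseteq T$, we have $I+S\subseteq I+T\subseteq I$, and $\min(I)=0$ is a condition about $I$ as a subset of $\mathbb{N}$ that does not depend on the ambient semigroup, so $I\in\mathfrak{I}_0(S)$. Then to see $I\in C_T$ I need $I+T=I$; one inclusion is the ideal condition $I+T\subseteq I$, and the reverse uses $0\in T$ to get $I=I+\{0\}\subseteq I+T$.

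For the inclusion $C_T\subseteq \mathfrak{I}_0(T)$, I would start from $I\in C_T$, so $I\in\mathfrak{I}_0(S)$ (hence $I\subseteq\mathbb{N}$ with $\min(I)=0$) and $I+T=I$. In particular $I+T\subseteq I$, and together with $\min(I)=0$ this is literally the definition of $I$ being a normalized ideal of $T$, so $I\in \mathfrak{I}_0(T)$.

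There is no real obstacle here; the statement is essentially unpacking definitions once one notices that the absorption equation $I+T=I$ is equivalent to the ideal condition for $T$ (using $0\in T$ for the non-obvious direction) and that any ideal of $T$ is automatically an ideal of $S$ because $S\subseteq T$. The only care needed is to verify that $I\in C_T$, which is a priori only a subset of $\mathfrak{I}_0(S)$, has $\min(I)=0$ so that it qualifies as a \emph{normalized} ideal of $T$; this is immediate because $C_T\subseteq \mathfrak{I}_0(S)$ by definition.
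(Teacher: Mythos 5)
Your proof is correct and follows essentially the same route as the paper's: both inclusions are verified directly from the definitions, using $S\subseteq T$ to get $I+S\subseteq I+T\subseteq I$ and $0\in T$ to upgrade $I+T\subseteq I$ to $I+T=I$. No issues.
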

\begin{proof}
    Let $I\in C_T$. Then, $\min(I)=0$ and $I+T=I$, whence $I\in \mathfrak{I}_0(T)$.
    Now, let $I\in \mathfrak{I}_0(T)$. Then, $\min(I)=0$ and $I+T\subseteq I$. Hence, $I+S\subseteq I+T\subseteq I$, and thus $I\in \mathfrak{I}_0(S)$. As $I\subseteq I+T\subseteq I$, we get $I+T=I$, which yields $I\in C_T$.
\end{proof}

Let $S$ be a numerical semigroup. Observe that if $f$ is an isomorphism between $(\mathfrak{I}_0(S),+)$ and $(\mathfrak{I}_0(T),+)$, then from Proposition~\ref{prop:CT-IT} (and its proof), we obtain the following consequences.
\begin{enumerate}[(1)]
    \item The restriction of $f$ to $\mathcal{O}(S)$ is an isomorphism between $(\mathcal{O}(S),+)$ and $\mathcal{O}(T),+)$.
    \item  Also, for $O$ and $O'$ over-semigroups of $S$, $O\subseteq O'$ if and only if $O+O'=O'$. Thus, we also obtain an isomorphism between the posets $(\mathcal{O}(S),\subseteq)$ and $(\mathcal{O}(T),\subseteq)$.
    \item If $O$ is an over-semigroup of $S$, then $f(C_O)=C_{f(O)}$. To prove this, take $I\in f(C_O)$. Then, there exists $J\in C_O$ such that $I=f(J)$. As $J+O=J$, we deduce that $I+f(O)=f(J)+f(O)=f(J+O)=f(J)=I$, and thus $I\in C_{f(O)}$. For the other inclusion, let $J\in C_{f(O)}$. Then, as $f$ is surjective, there exists $I\in \mathfrak{I}_0(S)$ such that $f(I)=J$. Since $J+f(O)=J$, we have $f(I+O)=f(I)+f(O)=J+f(O)=J=f(I)$, and as $f$ is injective, $I+O=I$, which means that $I\in C_O$ and so $J=f(I)\in f(C_O)$. Therefore, the restriction of $f$ to $\mathfrak{I}_0(O)$ is an isomorphism between $(\mathfrak{I}_0(O),+)$ and $(\mathfrak{I}_0(f(O)),+)$.    
\end{enumerate}

Unfortunately, from the poset $(\mathcal{O}(S),\subseteq)$ it is not possible to recover $S$ as the next example shows. As usual, for a set $A$ of non-negative integers, we denote by 
\[\langle A\rangle = \{ n_1a_1+\dots+n_ka_k : k\in \mathbb{N}, n_1,\dots,n_k\in \mathbb{N}, a_1,\dots,a_k\in A\},\]
which is a submonoid of $(\mathbb{N},+)$, and it is a numerical semigroup if and only if $\gcd(A)=1$ (see for instance \cite[Lemma~2.1]{ns}).

\begin{example}
 The numerical semigroups $\langle 3,5,7\rangle$ and $\langle 2,7\rangle$ have isomorphic posets of over-semi\-groups with respect to set inclusion.
\end{example}

Notice that if $E$ is an idempotent of $\mathfrak{I}_0(S)$, then $(C_E,+)$ is a monoid, but it is not a submonoid of $(\mathfrak{I}_0(S),+)$ unless $E=S$. There is a dual construction that allows us to construct submonoids of $(\mathfrak{I}_0(S),+)$ associated to its idempotents. Let $T$ be an over-semigroup of $S$. Then, $T\in \mathfrak{I}_0(S)$ and $T$ is idempotent. Define 
\[
T_\downarrow = \{ I \in \mathfrak{I}_0(S) : I \subseteq T\}.
\]

\begin{proposition}
    Let $S$ be a numerical semigroup, and let $T$ be an over-semigroup of $S$. For every $I\in \mathfrak{I}_0(S)$, $I\in T_\downarrow$ if and only if $I+T=T$. In particular, $(T_\downarrow,+)$ is a submonoid of $(\mathfrak{I}_0(S),+)$.
\end{proposition}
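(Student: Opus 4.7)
The plan is to prove the equivalence first and then derive the submonoid assertion as an immediate corollary, using the characterization we have just established together with the fact (from Proposition~\ref{prop:CT-IT}, or directly since $T$ is an over-semigroup of $S$) that $T+T=T$ in $\mathfrak{I}_0(S)$.

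For the forward direction, suppose $I\in T_\downarrow$, i.e.\ $I\subseteq T$. Since $0\in I$ and $T$ is idempotent, we obtain the chain $T=0+T\subseteq I+T\subseteq T+T=T$, which forces $I+T=T$. For the reverse direction, if $I+T=T$, then using $0\in T$ we get $I=I+\{0\}\subseteq I+T=T$, so $I\in T_\downarrow$. This is the main content of the statement and it is essentially routine once one observes that both $I$ and $T$ are normalized.

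The submonoid claim then follows in two lines. First, $S\in T_\downarrow$ since $S\subseteq T$, so $T_\downarrow$ contains the identity of $\mathfrak{I}_0(S)$. Second, if $I,J\in T_\downarrow$, then by the equivalence just proved $I+T=T$ and $J+T=T$, whence
\[
(I+J)+T = I+(J+T) = I+T = T,
\]
so $I+J\in T_\downarrow$. Hence $(T_\downarrow,+)$ is closed under addition and contains the neutral element, so it is a submonoid of $(\mathfrak{I}_0(S),+)$.

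I do not expect any real obstacle here: the only subtlety is to remember that we are working with normalized ideals (so $0$ is available as the minimum of every element involved) and that over-semigroups are exactly the idempotents of $\mathfrak{I}_0(S)$, both of which are already laid out in the preceding paragraphs.
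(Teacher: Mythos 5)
Your proof is correct and follows essentially the same route as the paper's: both directions of the equivalence use $I\subseteq I+T$ (from $0\in T$, resp.\ $0\in I$) together with idempotency of $T$, and the submonoid claim is obtained from the same associativity computation $(I+J)+T=I+(J+T)=T$ plus $S\in T_\downarrow$. No gaps.
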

\begin{proof}
    Let $I\in \mathfrak{I}_0(S)$. If $I+T=T$, as $I\subseteq I+T$, we obtain that $I\subseteq T$. Now, let $I\in \mathfrak{I}_0(S)$ with $I\subseteq T$. Then, $T\subseteq I+T\subseteq T+T=T$ (recall that $T$ is idempotent), and so $I+T=T$.

    Finally, take $I,J\in T_\downarrow$. Then $(I+J)+T=I+(J+T)=I+T=T$, and so $I+J\in T_\downarrow$. The identity element of $(T_\downarrow,+)$ is $S$.
\end{proof}

Take $T$ and $T'$ two over-semigroups of $S$ (equivalently, two idempotents of $\mathfrak{I}_0(S)$) with $T\subseteq T'$ (equivalently, $T+T'=T'$). Then, $(T'_\downarrow\cap C_T,+)$ is a monoid with identity element $T$.

\begin{example}
    Let $I_1=\{0,7\}+S$ and $I_2=\{0,2,3,5\}+S$. Both $I_1$ and $I_2$ are idempotents, and thus they are over-semigroups of $S$; moreover, $I_1\subseteq I_2$. The monoid $({I_2}_\downarrow \cap C_{I_1},+)$ has eight elements, and its Hasse diagram with respect to $\preceq$ has height four (the maximal strictly ascending chain with respect to $\preceq$ has length four). According to \cite[Remark~5,1]{c-da-gs}, if $({I_2}_\downarrow \cap C_{I_1},+)$ is isomorphic to $\mathfrak{I}_0(T)$ for some semigroup $T$, then the genus of $T$ should be three. Among the semigroups of genus three, the only one whose ideal class monoid has cardinality eight is $T=\langle 4,5,6,7\rangle$. However, $\mathfrak{I}_0(T)$ has only three irreducible elements while $({I_2}_\downarrow \cap C_{I_1},+)$ has four irreducible elements. Thus, $({I_2}_\downarrow \cap C_{I_1},+)$ is not isomorphic to the ideal class monoid of a numerical semigroup. 
\end{example}

We are now ready to prove that if $(\mathfrak{I}_0(S),+)$ is isomorphic to $(\mathfrak{I}_0(T),+)$, with $S$ and $T$ numerical semigroups, then $S=T$. To this end, we proceed by induction on the genus of $S$ (which must be the same as the genus of $T$ by \cite[Corollary~5.2]{c-da-gs}). Once we know $(\mathfrak{I}_0(S),+)$ all the unitary extensions of $S$ will be uniquely determined by the induction hypothesis.



\begin{lemma}\label{lem:intersect-unitary-extensions}
    Let $S$ be a numerical semigroup, $S\neq \mathbb{N}$. If $S$ is irreducible, then the intersection of all its unitary extensions is $S\cup\{\operatorname{F}(S)\}$. Otherwise, this intersection is $S$.
\end{lemma}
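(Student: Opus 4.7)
The plan is to split into the two cases stated and use the facts collected in the paragraphs immediately preceding the lemma, in particular that $S$ is irreducible if and only if $|\operatorname{SG}(S)|\le 1$, and that $\operatorname{F}(S)\in\operatorname{SG}(S)$ whenever $S\neq \mathbb{N}$.

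For the irreducible case, the hypothesis together with $S\neq\mathbb{N}$ forces $\operatorname{SG}(S)=\{\operatorname{F}(S)\}$, and hence $S$ has exactly one unitary extension, namely $S\cup\{\operatorname{F}(S)\}$. Thus the intersection of all unitary extensions is trivially $S\cup\{\operatorname{F}(S)\}$.

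For the non-irreducible case, $|\operatorname{SG}(S)|\ge 2$. Write $\operatorname{SG}(S)=\{x_1,\dots,x_k\}$ with $k\ge 2$; the unitary extensions are then precisely the sets $S\cup\{x_i\}$. The inclusion $S\subseteq \bigcap_{i=1}^k(S\cup\{x_i\})$ is immediate. Conversely, take $y\in \bigcap_{i=1}^k(S\cup\{x_i\})$. If $y\notin S$, then for each $i$ the membership $y\in S\cup\{x_i\}$ forces $y=x_i$, which is impossible since the $x_i$ are pairwise distinct and $k\ge 2$. Therefore $y\in S$, giving the reverse inclusion.

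There is no real obstacle: once the characterization of irreducibility through $|\operatorname{SG}(S)|$ and the identification of unitary extensions with the sets $S\cup\{x\}$ for $x\in\operatorname{SG}(S)$ are in hand, the argument reduces to the trivial set-theoretic observation that the intersection of two or more distinct one-point extensions of $S$ cannot contain any point outside $S$.
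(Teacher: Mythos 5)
Your proof is correct and follows essentially the same route as the paper: both arguments rest on the facts that unitary extensions are exactly the sets $S\cup\{x\}$ with $x\in\operatorname{SG}(S)$, that irreducibility is equivalent to $|\operatorname{SG}(S)|=1$ (given $\operatorname{F}(S)\in\operatorname{SG}(S)$ for $S\neq\mathbb{N}$), and that two distinct one-point extensions of $S$ intersect in $S$. The only cosmetic difference is that the paper exhibits two specific extensions whose intersection is $S$, whereas you argue elementwise over all of them; the content is identical.
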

\begin{proof}
    Recall that as $S\neq \mathbb{N}$, we have that $\operatorname{F}(S)\in \operatorname{SG}(S)$. Notice that $S$ is irreducible if and only if $|\operatorname{SG}(S)|=1$ \cite[Corollary~4.38]{ns}. Also, every unitary extension of $S$ is of the form $S\cup\{h\}$ with $h\in \operatorname{SG}(S)$. Hence, if $S$ is irreducible, then the intersection of all its unitary extensions (it has only one), is $S\cup\{\operatorname{F}(S)\}$. If $S$ is not irreducible, then take $h\in \operatorname{SG}(S)\setminus\{\operatorname{F}(S)\}$. Clearly, $S=(S\cup\{\operatorname{F}(S)\})\cap(S\cup\{h\})$, and thus $S=\bigcap_{g\in \operatorname{SG}(S)} (S\cup\{g\})$.
\end{proof}

\begin{theorem}\label{thm:isomorphic-class-monoids-addition}
    Let $S$ and $T$ be numerical semigroups. If $(\mathfrak{I}_0(S),+)$ is isomorphic to $(\mathfrak{I}_0(T),+)$, then $S=T$.
\end{theorem}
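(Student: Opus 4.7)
The plan is to proceed by strong induction on the genus $g=g(S)$. Since the genus is an invariant of the ideal class monoid by \cite[Corollary~5.2]{c-da-gs}, one has $g(T)=g(S)$, and the base case $g=0$ forces $S=T=\mathbb{N}$.

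For the inductive step, let $f\colon(\mathfrak{I}_0(S),+)\to(\mathfrak{I}_0(T),+)$ be a monoid isomorphism. Being an idempotent and being a quark are purely monoid-theoretic properties, so $f$ restricts to a bijection between the idempotent quarks of $\mathfrak{I}_0(S)$ and those of $\mathfrak{I}_0(T)$; by \cite[Proposition~5.13]{c-da-gs}, this amounts to a bijection between the unitary extensions of $S$ and those of $T$. Furthermore, by consequence~(3) recorded after Proposition~\ref{prop:CT-IT}, for every over-semigroup $O$ of $S$, $f$ restricts to a monoid isomorphism $(\mathfrak{I}_0(O),+)\cong(\mathfrak{I}_0(f(O)),+)$. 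Applying this to each unitary extension $U$ of $S$, and noting that $g(U)=g-1<g$, the inductive hypothesis yields $U=f(U)$. Hence the set of unitary extensions of $S$ coincides with the set of unitary extensions of $T$.

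From here I would split into two cases. If $S$ is not irreducible, then $|\operatorname{SG}(S)|\ge 2$ and $S$ has at least two unitary extensions; the same then holds for $T$, so $T$ is not irreducible either. Lemma~\ref{lem:intersect-unitary-extensions} then expresses $S$ and $T$ each as the intersection of their common family of unitary extensions, so $S=T$. The hard part is the irreducible case. When $S$ (and hence $T$) is irreducible, Lemma~\ref{lem:intersect-unitary-extensions} only recovers $S\cup\{\operatorname{F}(S)\}=T\cup\{\operatorname{F}(T)\}$, and $S$ is not determined by $S\cup\{\operatorname{F}(S)\}$ alone; the missing ingredient is precisely $\operatorname{F}(S)=\operatorname{F}(T)$. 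To obtain this, I would invoke the monoid-theoretic characterizations of symmetric and pseudo-symmetric numerical semigroups from \cite[Propositions~5.17 and~5.19, Theorem~5.20]{c-da-gs} to conclude that $S$ and $T$ are of the same irreducible type. Together with $g(S)=g(T)$, the identities $\operatorname{F}=2g-1$ in the symmetric case and $\operatorname{F}=2g-2$ in the pseudo-symmetric case then force $\operatorname{F}(S)=\operatorname{F}(T)$, and removing this common element from both sides of $S\cup\{\operatorname{F}(S)\}=T\cup\{\operatorname{F}(T)\}$ finishes the proof.
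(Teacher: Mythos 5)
Your proposal is correct and follows essentially the same route as the paper: induction on the genus, transfer of unitary extensions via idempotent quarks and the restriction isomorphisms $\mathfrak{I}_0(O)\cong\mathfrak{I}_0(f(O))$, Lemma~\ref{lem:intersect-unitary-extensions} in the reducible case, and the formulas $\operatorname{F}=2\operatorname{g}-1$ (symmetric) and $\operatorname{F}=2\operatorname{g}-2$ (pseudo-symmetric) to pin down the Frobenius number in the irreducible case. The only cosmetic difference is that you detect irreducibility of $T$ via the count of unitary extensions rather than the count of quarks, which is equally valid.
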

\begin{proof}
    Denote by $\varphi$ the isomorphism between $\mathfrak{I}_0(S)$ and $\mathfrak{I}_0(T)$.
    
    Notice that $S=\mathbb{N}$ if and only if $\mathfrak{I}_0(S)$ is trivial. Thus, we may assume that $S$ and $T$ are different from $\mathbb{N}$. Notice also that the number of quarks of $\mathfrak{I}_0(S)$ and $\mathfrak{I}_0(T)$ must be the same. Thus, $S$ is irreducible if and only if $T$ is irreducible. Also, by \cite[Corollary~5.2]{c-da-gs}, $\operatorname{g}(S)=\operatorname{g}(T)$. 

    We proceed by induction on the genus of $S$ (which is the same as the genus of $T$). For $\operatorname{g}(S)=0$ there is nothing to prove, since in this case $S=T=\mathbb{N}$. So, suppose that the assertion is true for all semigroups having genus $g$ and let us prove it for genus $g+1$. 

    Unitary extensions of $S$ correspond to idempotent quarks in $\mathfrak{I}_0(S)$ \cite[Proposition~5.13]{c-da-gs}. Thus, for every unitary extension $O$ of $S$, $\varphi(O)$ is also unitary extension of $T$, and by Proposition~\ref{prop:CT-IT}, $\mathfrak{I}_0(O)=C_O$ is isomorphic to $C_{\varphi(O)}=\mathfrak{I}_0(\varphi(O))$. Unitary extensions of $S$ have genus $g$ (the same holds for $T$). Thus, by induction hypothesis $S$ and $T$ have the same unitary extensions.

    If $S$ is not irreducible, then $T$ cannot be irreducible by the arguments given above. As $S$ and $T$ are not irreducible, they are the intersection of all their unitary extensions, and consequently $S=T$ (Lemma~\ref{lem:intersect-unitary-extensions}).

    If $S$ is irreducible and symmetric, then so is $T$, since in this setting both have a single quark \cite[Proposition~5.18]{c-da-gs}. In this case, by Lemma~\ref{lem:intersect-unitary-extensions}, the intersection of the unitary extensions of $S$ is $S\cup\{\operatorname{F}(S)\}$, which must be equal to $T\cup\{\operatorname{F}(T)\}$. We also know that $\operatorname{g}(S)=\operatorname{g}(T)$, and as $S$ and $T$ are symmetric, by \cite[Corollary~6]{ns-ap}, $\operatorname{F}(S)=2\operatorname{g}(S)-1=2\operatorname{g}(T)-1=\operatorname{F}(T)$. Thus, $S=(S\cup\{\operatorname{F}(S)\})\setminus\{\operatorname{F}(S)\}=(T\cup\{\operatorname{F}(T)\})\setminus\{\operatorname{F}(T)\}=T$.

    The remaining case is when $S$ and $T$ are both irreducible and pseudo-sy\-mme\-tric ($S$ is pseudo-symmetric if and only if $\mathfrak{I}_0(S)$ has two quarks; see \cite[Proposition~5.19]{c-da-gs}). In this setting, by using again \cite[Corollary~6]{ns-ap}, $\operatorname{F}(S)=2\operatorname{g}(S)-2=2\operatorname{g}(T)-2=\operatorname{F}(T)$, and arguing as in the preceding paragraph, we conclude that $S=T$.
\end{proof}

\section{The poset of the ideal class monoid induced by addition}

We solved \cite[Question~6.1]{c-da-gs}, but we still do not know how to recover a numerical semigroup by looking at a poset isomorphic to $(\mathfrak{I}_0(S),\preceq)$ \cite[Question~6.2]{c-da-gs}. There are several options to tackle this problem. The first could be to recover $\subseteq$ from $\preceq$, while the second could pass through identifying idempotent quarks in the Hasse diagram of $(\mathfrak{I}_0(S),\preceq)$. 

Clearly, if $I\preceq J$, then $I\subseteq J$. But the converse does not hold. Actually, $J$ covers $I$ with respect to set inclusion if and only if $|J\setminus I|=1$ (Lemma~\ref{lem:covers-dif-one}). However, it may happen that $J$ covers $I$ with respect to $\preceq$ and $|J\setminus I|>1$. 

\begin{example}
Take $S=\langle 5,9,17,21\rangle$, $I=\{0,3\}+S$ and $J=\{0,12\}+S$. Then, $I$ $\preceq$-covers $J$ and $|I\setminus J|=3$. This example was obtained by looking at the Hasse diagram of $(\mathfrak{I}_0(S),\preceq)$.

\end{example}

Figure~\ref{fig:hasse-preceq-4-6-9} shows the Hasse diagram of $(\mathfrak{I}_0(\langle 4,6,9\rangle),\preceq)$. The edges displayed as a dashed line are not part of the diagram, and correspond to the coverings with respect to set inclusion that are not coverings with respect to $\preceq$. A possible approach would be ``repair'' those missing edges. 

\begin{figure}
\centering
\includegraphics[scale=0.3]{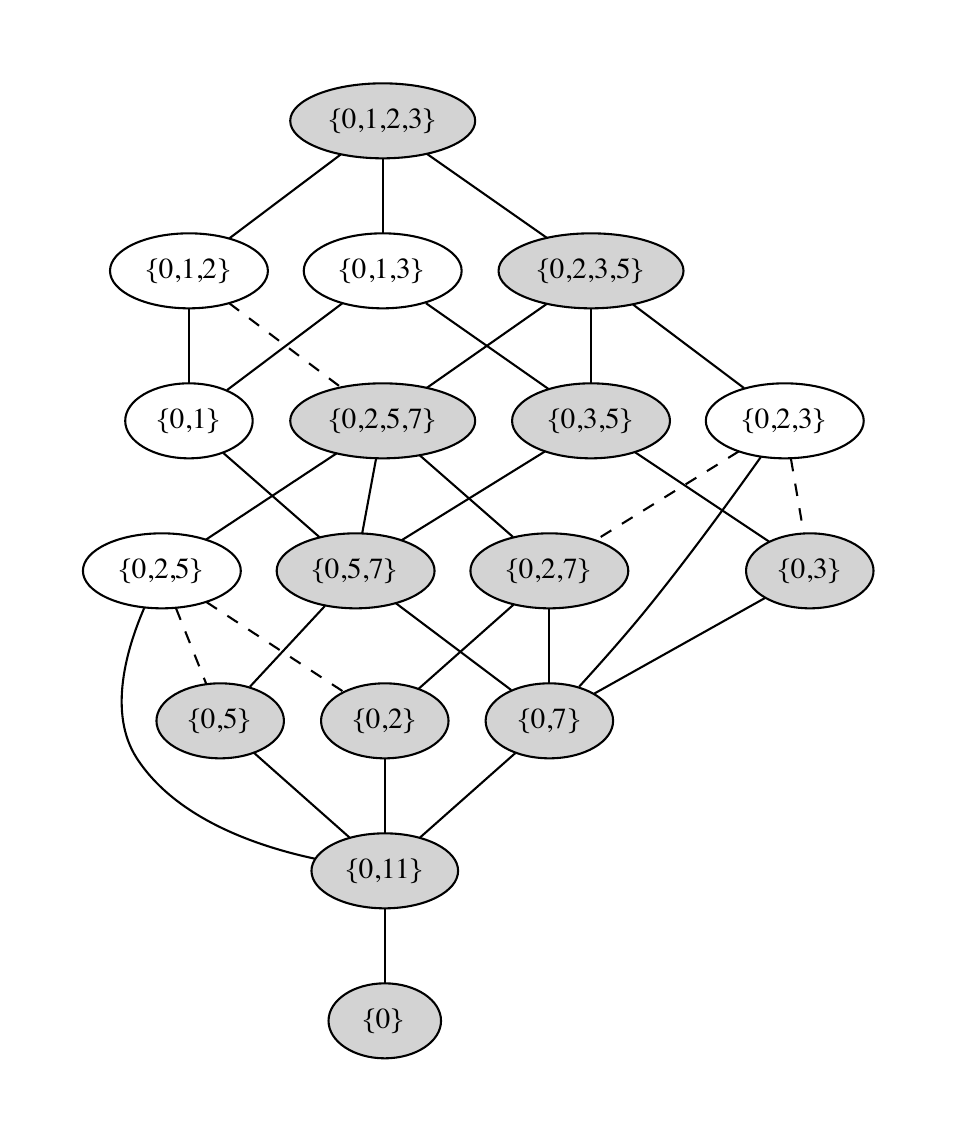}

\caption{The Hasse diagram of $(\mathfrak{I}_0(\langle 4,6,9\rangle,\preceq)$; the nodes are labeled with the minimal generating sets of the ideals. Dashed edges are those edges missing from the Hasse diagram of $(\mathfrak{I}_0(\langle 4,6,9\rangle),\subseteq)$. Idempotents are displayed in gray.}
\label{fig:hasse-preceq-4-6-9}
\end{figure}

As for the other approach. Quarks are easy to distinguish in the poset $(\mathfrak{I}_0,\preceq)$, since they are the ones with ``height'' one. However, even in simple examples, it is not possible to discern from the poset which ones are idempotent. The Hasse diagram of $\mathfrak{I}_0(\langle 3,4,5\rangle)$ is shown in Figure~\ref{fig:hasse-preceq-3-4-5}. It is not possible from the poset with respect to $\preceq$ to distinguish between $\{0,1\}+S$ and $\{0,2\}+S$; the latter being idempotent, while the first is not. Notice that in this case the genus is two, and there are only two numerical semigroups with this genus. The posets of the corresponding set of normalized ideals are different.

\begin{figure}
\includegraphics[scale=0.3]{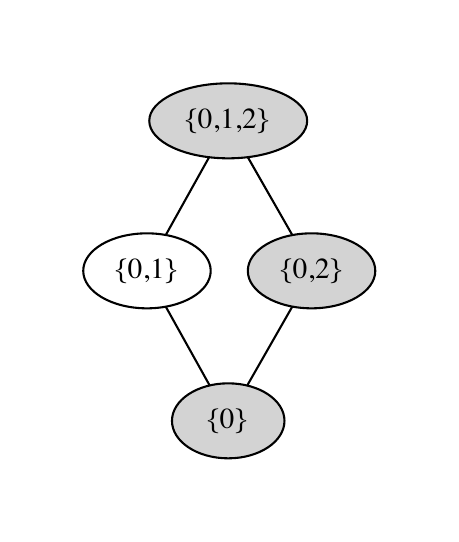}
\caption{The Hasse diagram of $(\mathfrak{I}_0(\langle 3,4,5\rangle,\preceq)$; nodes labeled with the minimal generating sets of the ideals.  Idempotents are displayed in gray.}
\label{fig:hasse-preceq-3-4-5}
\end{figure}

\FloatBarrier

\section*{Acknowledgments}
The author is partially supported by the grant number ProyExcel\_00868 (Proyecto de Excelencia de la Junta de Andalucía) and by the Junta de Andaluc\'ia Grant Number FQM--343. He also acknowledges financial support from the grant PID2022-138906NB-C21 funded by MCIN/AEI/10.13039/\bignumber{501100011033} and by ERDF ``A way of making Europe'', and from the Spanish Ministry of Science and Innovation (MICINN), through the ``Severo Ochoa and María de Maeztu Programme for Centres and Unities of Excellence'' (CEX2020-001105-M).

The author would like to thank Marco D'Anna and Salvo Tringali for their comments and suggestions.


\begin{thebibliography}{9}

\bibitem{a-gs} A. Assi, P. A. García-Sánchez, Constructing the set of complete intersection numerical semigroups with a given Frobenius number, Appl. Algebra Engrg. Comm. Comput. \textbf{24} (2013), 133--148.

\bibitem{ns-ap} A. Assi, M. D'Anna, P. A. García-Sánchez, \textit{Numerical semigroups and applications}, Second edition, RSME Springer series \textbf{3}, Springer, Switzerland, 2020.


\bibitem{b-k} V. Barucci, F. Khouja, \textit{On the class semigroup of a numerical semigroup}, Semigroup Forum, \textbf{92} (2016), 377--392. 


\bibitem{ba} M. Bras-Amorós, Numerical semigroups and codes, In: Algebraic geometry modeling in information theory, 167--218. Ser. Coding Theory Cryptol. \textbf{8}, World Scientific Publishing Co. Pte. Ltd., Hackensack, NJ, 2013.

\bibitem{c-da-gs} L. Casabella, M. D'Anna, P. A. García-Sánchez Apéry sets and the ideal class monoid of a numerical semigroup, to appear in Mediterr. J. Math.    

\bibitem{c-gm-ra} J. Chappelon, I. García-Marco, L. P. Montejano, J. L. Ramírez Alfonsín, Möbius function of semigroup posets through Hilbert series, Journal of Combinatorial Theory, Series A, \textbf{36} (2015), 238--251,

\bibitem{c-ra} J. Chappelon, J. L. Ramírez Alfonsín, On the Möbius function of the locally finite poset associated with a numerical semigroup. Semigroup Forum \textbf{87}(2013), 313--330.

\bibitem{numericalsgps} M. Delgado,  P.A.  García-Sánchez,  J. Morais, NumericalSgps, \textit{A package for numerical semigroups}, Version 1.3.1 (2022), (Refereed GAP package), \url{https://gap-packages.github.io/numericalsgps}.


\bibitem{GAP4} The GAP~Group, \textit{GAP -- Groups, Algorithms, and Programming},   Version 4.12.2,  2022, \url{https://www.gap-system.org}.

\bibitem{g-z} A. Geroldinger, Q. Zhong, QA characterization of seminormal C-monoids, Boll. Unione Mat. Ital. \textbf{12} (2019), 583--597.

\bibitem{mm-u} J. J. Moyano-Fernández, J. Uliczka, Lattice paths with given number of turns and semimodules over numerical semigroups, Semigroup Forum \textbf{88} (2014), 631–646.

\bibitem{ns} J. C. Rosales y P. A. García-Sánchez, Numerical semigroups, Developments in Mathematics, \textbf{20}, Springer, New York, 2009.

\bibitem{t} S. Tringali, An Abstract Factorization Theorem and Some Applications, J. Algebra, \textbf{602} (2022), 352--380.

\end{thebibliography}
\end{document}